\def\clP{\mathcal{P}}
\def\restr{{\upharpoonright}}
\newcommand\hull[1]{\overline{#1}}
\newcommand\hide[1]{}
\newcommand\comm[1]\empty
\newcommand\commOld[1]\empty
\def\iff{\Leftrightarrow}
\def\emp{\varnothing}
\def\frF{\framets{F}}
\def\Alg{\framets{A}}
\def\frA{\framets{A}}
\def\frG{\framets{G}}
\def\vL{L}
\newcommand\framets[1]{\mathsf{#1}}
\newcommand\classts[1]{\mathcal{#1}}
\def\EE{\exists}
\def\AA{\forall}
\def\Log{Log}
\def\ILog{{ILog}}
\def\clA{\classts{A}}
\def\clB{\classts{B}}
\def\clC{\classts{C}}
\def\clD{\classts{D}}
\author{Ilya  Shapirovsky}
\institute{Institute for Information Transmission Problems of the Russian Academy of Sciences, Moscow, Russia
\\ \smallskip \email{shapir@iitp.ru}}
\title{Modal logics of finite direct powers of $\omega$ have the finite model property}
\begin{document}

\maketitle
\begin{abstract}
%It is well-known that the modal logic of the frame $(\omega,\leq)$, natural numbers with the standard ordering, is tunable.
Let $(\omega^n,\preceq)$ be the direct power of $n$ instances of  $(\omega,\leq)$, natural numbers with the standard ordering,
$(\omega^n,\prec)$  the direct power of $n$ instances of  $(\omega,<)$.
We show that for all finite $n$, the modal logics  of $(\omega^n,\preceq)$
and of $(\omega^n,\prec)$
have the finite model property and moreover, the modal algebras of the frames $(\omega^n,\preceq)$ and
$(\omega^n,\prec)$
are locally finite.

%150--250 words.

\keywords{Modal logic  \and modal algebra \and finite model property \and local finiteness \and tuned partition \and
direct product of frames}
\end{abstract}

%%%%%%  FORMAT OF AUTHOR AND TITLE INFORMATION:

%%%%%%%%%%%%%%%%%%% The (default) case of one author
%%%%%%%%%%%%%%%%%%% and one or two lines of title

%%%%%%  ABSTRACT AND KEYWORDS  (obligatory)

\hide{Is well-foundness implied by ``refinable''? Denis: yes.}

\section{Introduction}
We consider modal logics of direct products of linear orders.
It is known that the logics of finite direct powers of real numbers and of rational numbers with the standard non-strict ordering
have the finite model property, are finitely axiomatizable, and consequently are decidable.
These non-trivial results were obtained in
 \cite{GoldMink1980}, and independently in \cite{ShehtMink83}. \comm{More details: $n=2$}
Later, analogous results  were obtained for the logics of finite direct powers of $(\mathbb{R},<)$
  \cite{ShShChron03}.
Recently, it was shown that the direct squares $(\mathbb{R},\leq,\geq)^2$ and $(\mathbb{R},<,>)^2$ have decidable bimodal logics \cite{MinkTemp-Nonstr}, \cite{MinkTemp-Str}.
Direct products  of well-founded orders %remained unconsidered
have never been considered before in the context of modal logic.

\smallskip

Let $(\omega^n,\preceq)$ be the direct power of $n$ instances of  $(\omega,\leq)$, natural numbers with the standard ordering: for $x,y\in \omega^n$, $x\preceq y$ iff $x(i)\leq y(i)$ for all $i<n$.
Likewise, let $(\omega^n,\prec)$ be the the direct power  $(\omega,<)^n$. %: $x\prec  y$ iff $x(i)< y(i)$ for all $i<n$.
 We will show that for all finite $n>0$, the logics  $\Log(\omega^n,\preceq)$
and
$\Log(\omega^n,\prec)$
have the finite model property and moreover, the algebras of the frames $(\omega^n,\preceq)$
and $(\omega^n,\prec)$
are
locally finite.

\section{Partitions of frames, local finiteness, and the finite model property}

We assume the reader is familiar with the basic notions of modal logics \cite{BRV-ML,CZ}.
By a {\em logic} we mean a normal propositional modal logic.
For a (Kripke) frame $\frF$, $\Log(\frF)$ denotes its modal logic, i.e., the set of all valid in $\frF$ modal formulas.
For a set $W$, $\clP(W)$ denotes the powerset of $W$. The {\em (complex) algebra of a frame} $(W,R)$ is the modal algebra
$(\clP(W),R^{-1})$. The algebra of $\frF$ is denoted by $\Alg(\frF)$.
A logic has the {\em finite model property} if it is complete with respect to a class of finite frame (equivalently,  finite algebras).

\smallskip

{\em A partition} $\clA$ of a set $W$ is a set
of non-empty pairwise disjoint sets such that $W=\bigcup \clA$. A partition $\clB$ {\em refines} $\clA$, if each element of $\clA$ is the %\engl
 union of some elements of $\clB$.

\begin{definition}\label{def:tune}
\normalfont
Let $\frF=(W,R)$ be a Kripke frame.
A partition $\clA$ of $W$ is {\em tuned (in $\frF$)} if for every $U,V\in \clA$,
$$\EE u\in U \, \EE v\in V \, uR v  \;\Rightarrow\; \AA u\in U\,\EE v\in V\,uRv.$$
$\frF$ is {\em tunable} if for
 every finite partition $\clA$ of $\frF$ there exists a finite tuned refinement $\clB$ of $\clA$.

% A frame is said to be {\em tunable} if all its finite partitions admit a tuned finite refinement.
\end{definition}

\begin{proposition}\label{prop:Filtr}
If $\frF$ is tunable, then $\Log(\frF)$ has the finite model property.
\end{proposition}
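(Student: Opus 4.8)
The plan is to run a filtration-style argument in which the tuning condition supplies exactly the back clause needed to make the quotient map onto a finite frame into a p-morphism; truth preservation then comes for free. Concretely, I would show that $\Log(\frF)$ is complete with respect to the class of all \emph{finite} p-morphic images of $\frF$ (each of which, being a p-morphic image, validates $\Log(\frF)$). So fix a formula $\vf \notin \Log(\frF)$, a model $M=(\frF,\eta)$ on $\frF=(W,R)$, and a point $w_0$ with $M,w_0\not\models\vf$. Let $\clA$ be the partition of $W$ whose blocks are the (at most $2^{k}$, where $k$ is the number of variables in $\vf$) maximal sets of points agreeing on the truth under $\eta$ of all variables occurring in $\vf$. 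Since $\frF$ is tunable, choose a finite tuned refinement $\clB$ of $\clA$, and form the finite frame $\frG=(\clB,R')$ with $U\,R'\,V \iff \EE u\in U\,\EE v\in V\ uRv$, together with the map $f\colon W\to\clB$ sending each point to the block containing it.

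The first key step is to check that $f$ is a p-morphism of $\frF$ onto $\frG$. Surjectivity holds because $\clB$ covers $W$, and the forth condition $uRv\Rightarrow f(u)\,R'\,f(v)$ is immediate from the definition of $R'$. The back condition is where the hypothesis is used: if $f(u)\,R'\,U$, then some point of the block $f(u)$ has an $R$-successor in $U$, so by the tuning condition applied to the pair $(f(u),U)$ of blocks of $\clB$, every point of $f(u)$ has an $R$-successor in $U$; in particular $u$ has a successor $v\in U$, and this $v$ satisfies $f(v)=U$ and $uRv$, as required.

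The second step is to transfer the refutation. Because $\clB$ refines $\clA$, each block is homogeneous for the variables of $\vf$, so the valuation $\theta$ on $\frG$ given by $U\in\theta(p)\iff U\subseteq\eta(p)$ for variables $p$ of $\vf$ (and, say, $\theta(p)=\emp$ otherwise) has $f^{-1}(\theta(p))=\eta(p)$ on those variables. The standard truth-preservation lemma for p-morphisms then gives $M,w\models\psi \iff (\frG,\theta),f(w)\models\psi$ for every $\psi$ in the variables of $\vf$; applied to $\vf$ at $w_0$, this refutes $\vf$ on the finite frame $\frG$. Letting $\vf$ range over all non-theorems of $\Log(\frF)$ yields the desired class of finite frames, hence the finite model property.

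The argument is short; its only real content is the observation that Definition~\ref{def:tune} is tailored precisely so that the quotient map satisfies the back condition. The points requiring (routine) care are: making the initial partition $\clA$ fine enough that the refuting valuation factors through $f$, and using that the blocks of a partition are nonempty so that $R'$ and $\theta$ are well defined; both are guaranteed by the definitions of partition and of ``refines''.
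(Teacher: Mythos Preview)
Your argument is correct. It differs from the paper's route, which is algebraic rather than frame-theoretic: the paper observes that a finite partition $\clB$ is tuned in $(W,R)$ iff the family $\{\bigcup X\mid X\subseteq\clB\}$ is closed under $R^{-1}$ and hence is a finite subalgebra of $\Alg(\frF)$; tunability therefore says exactly that $\Alg(\frF)$ is locally finite (any finite set of generators induces a finite partition, and a tuned refinement of it yields a finite subalgebra containing the generators). Since $\Log(\frF)$ is the logic of the finitely generated subalgebras of $\Alg(\frF)$, local finiteness gives the finite model property via finite algebras. The two routes are dual: your p-morphism $f$ onto $\frG$ corresponds to the embedding of the finite subalgebra generated by $\clB$ into $\Alg(\frF)$, and the back clause for $f$ is precisely closure of that subalgebra under $R^{-1}$. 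The paper's path has the bonus of delivering Proposition~\ref{prop:tunableLF} (local finiteness of $\Alg(\frF)$) along the way, which is used later; your filtration-style argument is more self-contained and needs no algebraic language.
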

Apparently, this fact was first observed by H. Franz\'{e}n (see \cite{Franz-Bull}). \comm{Fine? Double-check!}
For the proof, we notice the following:
a finite partition $\clB$ is tuned in a frame $(W,R)$ iff the family
$\{\cup x\mid x\subseteq \clB\}$ of subsets of $W$ forms a subalgebra of the modal algebra $(\clP(W), R^{-1})$.
Recall that an algebra $\frA$ is {\em locally finite} if every finitely generated subalgebra of $\frA$ is finite.
Thus, from Definition \ref{def:tune} we have:
\begin{proposition}\label{prop:tunableLF}
The algebra of a frame $\frF$ is locally finite iff $\frF$ is tunable.
\end{proposition}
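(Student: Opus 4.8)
The plan is to read off both implications from the observation recorded just above the statement: a finite partition $\clB$ of $(W,R)$ is tuned if and only if $\{\bigcup x\mid x\subseteq\clB\}$ is a subalgebra, in the modal sense (so also closed under $R^{-1}$), of $\Alg(\frF)=(\clP(W),R^{-1})$, where $\frF=(W,R)$. Each direction is then a translation between finite partitions of $W$ and finite subalgebras of $\Alg(\frF)$: a finite partition generates a finite subalgebra, and a finite subalgebra comes with a finite partition of $W$ into its atoms.

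For ``tunable $\Rightarrow$ locally finite'', let $\frF=(W,R)$ be tunable and let $\frA_0$ be a finitely generated subalgebra of $\Alg(\frF)$, generated by $A_1,\dots,A_k\subseteq W$. First I would pass to the finite partition $\clA$ of $W$ whose blocks are the non-empty sets $\bigcap_{i\in S}A_i\cap\bigcap_{i\notin S}(W\setminus A_i)$ for $S\subseteq\{1,\dots,k\}$; every $A_i$ is a union of blocks of $\clA$. Tunability yields a finite tuned refinement $\clB$ of $\clA$, and by the observation $\frA_1=\{\bigcup x\mid x\subseteq\clB\}$ is a subalgebra of $\Alg(\frF)$. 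It is finite (at most $2^{|\clB|}$ elements), and since $\clB$ refines $\clA$ it contains every block of $\clA$, hence every $A_i$; therefore $\frA_0\subseteq\frA_1$ is finite.

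For the converse, assume $\Alg(\frF)$ is locally finite and let $\clA$ be a finite partition of $W$. Let $\frA_0$ be the subalgebra of $\Alg(\frF)$ generated by the (finitely many) blocks of $\clA$; by local finiteness it is finite, hence a finite Boolean algebra, hence atomic, with top element $W$. Let $\clB$ be the set of atoms of $\frA_0$: it is a finite partition of $W$, it refines $\clA$ (each block of $\clA$ belongs to $\frA_0$, so is a union of atoms), and $\frA_0=\{\bigcup x\mid x\subseteq\clB\}$. Since $\frA_0$ is a subalgebra, the observation gives that $\clB$ is tuned, so $\frF$ is tunable.

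I do not expect a genuine obstacle: the mathematical content sits in the quoted observation (whose verification is the routine unfolding of the definition of ``tuned'' together with the description of $R^{-1}$ applied to a union of blocks), and what remains is bookkeeping --- checking that the block and atom constructions yield honest finite partitions of $W$, that refinement is preserved, and that ``subalgebra'' is read throughout in the modal sense so that the observation applies verbatim. The only mildly delicate point worth flagging is that a finite modal subalgebra of $\clP(W)$ is automatically atomic, which is exactly what makes the atom partition available in the second direction.
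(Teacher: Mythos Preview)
Your proposal is correct and follows exactly the approach the paper intends: the paper reduces the proposition to the quoted observation about tuned partitions versus finite subalgebras and leaves the two directions implicit, while you spell out the standard translation (generators $\rightsquigarrow$ induced partition $\rightsquigarrow$ tuned refinement $\rightsquigarrow$ finite subalgebra, and finite partition $\rightsquigarrow$ generated subalgebra $\rightsquigarrow$ atoms) in full. There is nothing to add; your write-up is a faithful expansion of what the paper sketches in one sentence.
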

If $\vL$ is the logic of a frame $\frF$,
then $\vL$ is the logic of the modal algebra $\Alg(\frF)$.  Equivalently, $\vL$ is the logic of finitely generated
subalgebras of $\Alg(\frF)$.   It follows that if $\Alg(\frF)$
is locally finite, then $\vL$ has the finite model property.

\smallskip
Thus, logics of tunable frames  have the finite model property,
and moreover, algebras of tunable frames are locally finite.
\comm{see Nick's terminology}

\begin{example}\label{ex:linear}
Consider the frame $(\omega,\leq)$, natural numbers with the standard ordering.
Suppose that $\clA$ is a finite partition of $\omega$. If every $A\in\clA$ is infinite, then $\clA$ is tuned in $(\omega,\leq)$ and in $(\omega,<)$.
Otherwise, let $k_0$ be the greatest element of the finite set $\bigcup \{A\in\clA\mid A \textrm{ is finite}\}$, and $U=\{k\mid k_0<k<\omega\}$.
Consider the following finite partition $\clB$ of $\omega$:
$$\clB=\{\{k\}\mid k\leq k_0\} \cup \{A\cap U\mid A \text{ is an infinite element of } \clA \}.$$
Each
element of $\clB$ is either infinite, or a singleton, and singletons in $\clB$ cover an initial segment of $\omega$. Thus, $\clB$ is a finite refinement of $\clA$ which is tuned in $(\omega,\leq)$ and in $(\omega,<)$.

It follows that the algebras of the frames $(\omega,\leq)$ and $(\omega,<)$ are locally finite.
\end{example}

\begin{remark}
Although the algebras
of the frames $(\omega,\leq)$ and $(\omega,<)$ are locally finite, the logics of these frames are not.

Recall that a logic $\vL$ is {\em locally finite} (or {\em locally tabular}) if the
Lindenbaum algebra of $\vL$ is locally finite.
A logic of a transitive frame is locally finite iff the frame is of finite height \cite{Seg_Essay},\cite{Max1975}.
It terms of tuned partitions, local finiteness of a logic is characterized  as follows \cite{LocalTab16AiML}:
the logic of a frame $\frF$ is locally finite
iff there exists a function $f:\omega\to \omega$ such that
 for every finite partition $\clA$ of $W$ there exists a tuned in $\frF$ refinement
$\clB$ of $\clA$ such that $|\clB|\leq f(|\clA|)$.

%One can see that in the above example, the size of $\clB$ can be arbitrary finite number.
\end{remark}

\section{Main result}

\begin{theorem}\label{thm:omegaNpLocFin}
For all finite $n>0$, the algebras $\Alg(\omega^n,\preceq)$ and $\Alg(\omega^n,\prec)$ are locally finite.
\end{theorem}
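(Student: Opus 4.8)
The plan is to invoke Proposition~\ref{prop:tunableLF} and reduce the statement to the tunability of the frames $(\omega^n,\preceq)$ and $(\omega^n,\prec)$, which I would prove by induction on $n$. The base case $n=1$ is Example~\ref{ex:linear}. For the inductive step, identify $\omega^{n+1}$ with $\omega^n\times\omega$ carrying the product of the order of $\omega^n$ with the order of $\omega$, and write $S$ for this product relation. By the reformulation of tunedness recorded after Proposition~\ref{prop:Filtr}, tunability amounts to showing that the blocks of an arbitrary finite partition $\clA$ of $\omega^n\times\omega$ generate a finite subalgebra of $\Alg(\omega^{n+1},\preceq)$, whose atoms are then the required finite tuned refinement of $\clA$. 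The basic device is the analysis of subsets via cross-sections: for $X\subseteq\omega^n\times\omega$ and $k\in\omega$ put $X^{(k)}=\{a\in\omega^n\mid(a,k)\in X\}$. Boolean operations act cross-sectionwise, and a direct computation shows that the diamond $\Di_S$ of the complex algebra (downward closure along $S$) satisfies $(\Di_S X)^{(k)}=\bigcup_{m\geq k}\Di(X^{(m)})$, where on the right $\Di$ denotes downward closure in $(\omega^n,\preceq)$; for the strict frames the analogous identities hold.

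Three facts drive the argument. First, as in Example~\ref{ex:linear}, along the $\omega$-factor only an initial segment of values needs to be separated into singletons, the rest forming a cofinite tail. Second, cross-sections living in $\omega^n$ are controlled by the induction hypothesis: since $\Alg(\omega^n,\preceq)$ is locally finite, any finitely many of them generate a finite subalgebra of $\Alg(\omega^n,\preceq)$, closed under downward closure in $\omega^n$. Third --- the new ingredient --- $(\omega^n,\preceq)$ is a well-quasi-order (Dickson's lemma), so every upset of $\omega^n$ is a finite union of cones, and hence every \emph{descending} chain of downsets of $\omega^n$ is eventually constant. Applying this to $(\Di_S X)^{(k)}=\bigcup_{m\geq k}\Di(X^{(m)})$, which is antitone in $k$ and consists of downsets of $\omega^n$, one concludes that the cross-section of any downward closure $\Di_S X$ along the $\omega$-factor is eventually constant; from some threshold $K$ on it equals $\Di(\bigcup_{m\geq K}X^{(m)})$, a fixed downset of $\omega^n$. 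Thus closing $\clA$ off under $\Di_S$ produces sets that, above a fixed value of the last coordinate, are cylinders over fixed cross-sections in $\omega^n$.

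These facts suggest the following construction: starting from $\clA$, repeatedly pass to a refinement by (i) separating the last coordinate into singletons up to the current threshold and a tail; (ii) collecting the finitely many cross-sections occurring below the threshold together with the finitely many ``limit'' cross-sections $\Di(\bigcup_{m\geq K}(\text{union of blocks})^{(m)})$ governing the tail, and applying the induction hypothesis to obtain a finite partition of $\omega^n$, refining them all and closed under downward closure in $\omega^n$; (iii) taking the common refinement with the blocks already present. Using the cross-section identity for $\Di_S$ one then checks that, once this stabilizes, the resulting finite partition $\clB$ is tuned in $(\omega^{n+1},\preceq)$ --- the downward closure of a union of $\clB$-blocks is again such a union --- and refines $\clA$. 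The strict frames $(\omega^n,\prec)$ are treated identically, via Example~\ref{ex:linear} and the strict version of the cross-section identity.

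I expect the main obstacle to be precisely the termination and uniformity of this refinement. One application of $\Di_S$ introduces new limit cross-sections and may raise the threshold on the last coordinate; the induction hypothesis bounds each individual round but not, by itself, the whole iteration. The technical heart must be to show that the process stabilizes after finitely many rounds: that the subalgebra of $\Alg(\omega^n,\preceq)$ generated by \emph{all} cross-sections that ever occur is finite, and that a \emph{single} threshold in the last coordinate eventually suffices for every set produced. Establishing this calls for a careful, uniform exploitation of the well-quasi-order structure of $\omega^n$ --- handling simultaneously the finitely many atoms of $\clA$, their projections to $\omega^n$, and their iterated downward closures --- and it is this global bookkeeping, rather than any isolated step, that carries the real weight.
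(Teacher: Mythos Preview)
You correctly identify termination as the crux, but the ingredients you list do not supply it. Each round applies the induction hypothesis to a \emph{fixed} finite family of subsets of $\omega^n$; the next round may raise the threshold and enlarge this family, and neither local finiteness of $\Alg(\omega^n,\preceq)$ (which bounds subalgebras generated by a fixed finite set, not a growing one) nor Dickson's lemma (which controls a single application of $\Di_S$, not its iteration against Boolean operations and the original blocks) yields a uniform bound across rounds. The shape of the target is also more delicate than your construction suggests: for the two-block partition of $\omega^2$ into the diagonal $A=\{(k,k):k\in\omega\}$ and its complement, the coarsest tuned refinement is $\{A,\omega^2\setminus A\}$ itself, with no cylinder structure in either coordinate, and the subalgebra of $\Alg(\omega,\leq)$ generated by the cross-sections $A^{(k)}=\{k\}$ is infinite. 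So one cannot hope to establish that ``the subalgebra of $\Alg(\omega^n,\preceq)$ generated by all cross-sections that ever occur is finite'' by reading it off from the induction hypothesis; if your procedure does stabilize, the reason lies elsewhere, and you have not indicated where.

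The paper's proof avoids iteration by strengthening the inductive invariant. It isolates \emph{monotone} partitions of $\omega^n$ (each block cofinal in its hull, with the set $J(\cdot)$ of varying coordinates monotone along $\preceq$), shows directly that monotone partitions are tuned in both $(\omega^n,\preceq)$ and $(\omega^n,\prec)$, and proves by induction on $n$ that every finite partition admits a finite monotone refinement. The inductive step does not single out one coordinate: one first chooses $k_0$ so that every block meeting the corner $U_{k_0}=\{x:\forall i\;x(i)\geq k_0\}$ is cofinal in $\omega^n$, whence $\clA\restr U_{k_0}$ is already monotone; one then extends outward one layer at a time, each layer $U_{k-1}\setminus U_k$ decomposing into faces $V_I\cong\omega^{n-|I|}$ (indexed by nonempty $I\subseteq n$) on which the induction hypothesis yields finite monotone refinements, glued via projections so as to preserve $J$-monotonicity across faces. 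Monotonicity is precisely the bookkeeping device that makes a one-pass construction work; no fixed-point iteration is needed.
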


The simple case $n=1$ was considered in Example \ref{ex:linear}.
To prove the theorem for the case of arbitrary finite $n$, we need some auxiliary constructions.

\begin{definition}\label{def:monot}\normalfont
\commOld{Orthogonal hull; orthant;Diophantine plane}
%For sets $U,V\in \omega^n$ let $U+V$ be their pointwise sum
%$\{z\in \omega^n\mid \EE x\in U\,\EE y\in V\, \AA i<n\, (z(i)=x(i)+y(i))\}$.
Consider a non-empty $V\subseteq \omega^n$.  \comm{dim-codim?}
Put
$$
\begin{array}{llll}
&J(V)&= \{i<n\mid \EE x\in V\,\EE y\in V\; x(i)\neq y(i)\}, \\%&\text{({\em dimension of} $V$)},\\
&I(V)&=\{i<n\mid \AA x\in V\,\AA y\in V\;  x(i)= y(i)\}=n{\setminus}J(V).% & \text{({\em codimension of $V$})}.
\end{array}
$$
%Let $J(V)=\{i\in n\mid \EE x,y\in V\,  (x(i)\neq y(i))\}$.
%\commOld{$|J(V)|$ is an analogue of the dimension of $V$, $|I(V)|$ is for codimension.  This is why $V$ is assumed to be non-empty. }
%The {\em hull of} $V$ is the set $\hull{V}=V+\{y\in \omega^n \mid \AA i<n \, (y(i)=0 \textrm{ or } i\in I(V))\}$.
The {\em hull of} $V$ is the set $$\hull{V}=\{y\in \omega^n \mid \AA i\in I(V)\, (y(i)=x(i) \textrm{ for some (for all) } x\in V)\}.$$
$V$ is {\em pre-cofinal} if it is cofinal in its hull, i.e.,
$$\AA x\in \hull{V} \, \EE y\in V \, x\preceq y.$$

%\comm{explain the logic: $n=2$}.

%A partition of $\omega^n$ is {\em pre-cofinal}
%if all its elements are pre-cofinal.

A partition $\clA$ of $V\subseteq \omega^n$ is {\em monotone} if
\begin{itemize}
\item[-]  all of its  elements are pre-cofinal, and \comm{ENGL: all of its}
\item[-]  for all $x,y\in V$ such that $x\preceq y$ we have $J([x]_\clA)\subseteq J([y]_\clA)$,
\end{itemize}
where $[x]_\clA$ is the element of $\clA$ containing $x$.
\end{definition}

\hide{Correct version for preceq:
\begin{lemma} If $\clA$ is a monotone partition of  $\omega^n$, then $\clA$ is tuned.
\end{lemma}
\begin{proof}
Let $A,B\in \clA$, $x,y\in A$, $x\preceq z\in B$. Let $u$ be the following point in $\omega^n$:
\begin{eqnarray}\label{eqn:u-def}
u(i)=y(i)   \text{ for }i\in J(A), \text{ and }
u(i)=z(i)    \text{ for } i\in I(A).
\end{eqnarray}
We have $$\{i<n\mid u(i)\neq z(i)\} \subseteq J(A) \subseteq J(B);$$
the first inclusion follows from (\ref{eqn:u-def}), the second follows from the monotonicity of $\clA$.
Hence, we have $u(i)=z(i)$ for all $i\in I(B)$. By the definition of $\hull{B}$, we have $u\in \hull{B}$.
Since  $B$ is cofinal in $\hull{B}$  (we use monotonicity again), for some $u'\in B$ we have $u\preceq u'$.

By (\ref{eqn:u-def}), we have $y(i)\leq u(i)$ for all $i<n$: indeed, $y(i)=x(i)\leq z(i)$ for $i\in I(A)$, and  $u(i)= z(i)$ otherwise. Thus, $y\preceq u$, and so
 $y\preceq u'$, as required.
\qed
\end{proof}
}

\begin{lemma} If $\clA$ is a monotone partition of  $\omega^n$, then $\clA$ is  tuned in
$(\omega^n, \preceq)$ and in $(\omega^n, \prec)$.
\end{lemma}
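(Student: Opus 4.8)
The plan is to prove the two cases in parallel, since the argument is essentially the same for $\preceq$ and for $\prec$. Fix a monotone partition $\clA$ of $\omega^n$ and take $A,B\in\clA$ with some $x\in A$ and $z\in B$ such that $xRz$, where $R$ is $\preceq$ (resp. $\prec$); given an arbitrary $y\in A$ we must produce $y'\in B$ with $yRy'$. Following the bracketed draft for the non-strict case, I would define an auxiliary point $u\in\omega^n$ by copying the coordinates of $y$ on $J(A)$ and the coordinates of $z$ on $I(A)$:
\begin{equation}\label{eqn:u-def-plan}
u(i)=y(i)\ \text{ for }i\in J(A),\qquad u(i)=z(i)\ \text{ for }i\in I(A).
\end{equation}

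The key steps are then: (1) show $u\in\hull{B}$. Since $u$ differs from $z$ only on coordinates in $J(A)$, we have $\{i<n\mid u(i)\neq z(i)\}\subseteq J(A)$, and monotonicity of $\clA$ applied to $x\preceq z$ gives $J(A)=J([x]_\clA)\subseteq J([z]_\clA)=J(B)$; hence $u$ agrees with $z$ on all of $I(B)$, which by definition of the hull means $u\in\hull{B}$. (2) Use pre-cofinality of $B$ (part of monotonicity) to find $u'\in B$ with $u\preceq u'$. (3) Check $yRu$, hence $yRu'$: on $i\in J(A)$ we have $u(i)=y(i)$, and on $i\in I(A)$ we have $y(i)=x(i)$ (all points of $A$ agree there) and $x(i)\le z(i)=u(i)$ from $x\preceq z$; so $y\preceq u$. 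For the strict case, note that $xRz$ with $R={\prec}$ forces $J(A)\neq\emptyset$ — indeed $x\prec z$ means $x(i)<z(i)$ for all $i$, so $x\neq z$ forces all $n$ coordinates to differ and in particular $I(A)$ cannot be all of $n$ once $|A|>1$; more carefully, we get strict inequality $y(i)<u(i)$ on $I(A)$ directly from $x(i)<z(i)=u(i)$, and on $J(A)$ we have $u(i)=y(i)$, so $y\preceq u$ but we still need strictness. Here I would instead observe that for $\prec$ one should pick $u'$ strictly above $u$: since $B$ is cofinal in $\hull B$ and $\hull B$ has no maximal element in any coordinate of $J(B)$, and $J(A)\subseteq J(B)$, one can choose $u'\in B$ with $u\prec u'$ when $J(A)\neq\emptyset$, and when $J(A)=\emptyset$ the set $A$ is a singleton $\{x\}$ so $y=x$ and $y'=z$ works.

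Assembling: $yRu'$ in both cases ($y\preceq u\preceq u'$ for $\preceq$; $y\preceq u\prec u'$ for $\prec$), and $u'\in B$, which is exactly the conclusion of the tunedness condition. So $\clA$ is tuned in $(\omega^n,\preceq)$ and in $(\omega^n,\prec)$.

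The main obstacle I anticipate is the strict case: the clean displayed computation $y\preceq u$ only yields a non-strict inequality, so the argument must be arranged so that the final step from $u$ (or $y$) up to a point of $B$ picks up the needed strictness. The fix is to exploit that pre-cofinality of $B$ combined with $J(A)\subseteq J(B)$ lets us go strictly above $u$ inside $B$ whenever $A$ is non-trivial, and to dispose of the trivial singleton case separately; verifying that this choice is legitimate (i.e. that $\hull B$ genuinely contains points $\succ u$ in the relevant coordinates, which holds because $\hull B$ is a product of copies of $\omega$ along $J(B)$ with fixed values along $I(B)$, and $\omega$ has no maximum) is the one place where a little care beyond the non-strict argument is required.
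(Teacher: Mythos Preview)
Your argument for the non-strict order $\preceq$ is correct and is essentially the paper's proof.

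For the strict order there is a real slip. You claim that when $J(A)\neq\emptyset$ one can choose $u'\in B$ with $u\prec u'$. This is false whenever $I(B)\neq\emptyset$: every point of $B$ agrees with $z$ on each coordinate in $I(B)$, and so does $u$ (since $I(B)\subseteq I(A)$ and $u(i)=z(i)$ on $I(A)$), hence $u'(i)=u(i)$ there and $u\prec u'$ is impossible. What you actually need---and what your closing paragraph almost says---is only $u'(i)>u(i)$ for $i\in J(A)$. That is obtained by applying pre-cofinality of $B$ not to $u$ but to the point $u^{+}$ with $u^{+}(i)=u(i)+1$ for $i\in J(A)$ and $u^{+}(i)=u(i)$ for $i\in I(A)$; since $J(A)\subseteq J(B)$, this $u^{+}$ still lies in $\hull{B}$. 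Then on $I(A)$ we have $y(i)=x(i)<z(i)\le u'(i)$, and on $J(A)$ we have $y(i)=u(i)<u^{+}(i)\le u'(i)$, giving $y\prec u'$. (Incidentally, the aside that ``$x\prec z$ forces $J(A)\neq\emptyset$'' is not right: $z$ lies in $B$, not $A$, so $x\prec z$ says nothing about $J(A)$; your separate treatment of the singleton case $J(A)=\emptyset$ is what actually handles this.)

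The paper avoids all of this with a one-line device: it sets $u(i)=y(i)+1$ (rather than $y(i)$) on $J(A)$, keeping $u(i)=z(i)$ on $I(A)$. The hull argument and pre-cofinality give $u'\in B$ with $u\preceq u'$ exactly as before, but now $y\prec u$ holds directly---strict on $I(A)$ because $y(i)=x(i)<z(i)=u(i)$, strict on $J(A)$ because $u(i)=y(i)+1$---so $y\prec u\preceq u'$ yields $y\prec u'$ with no case split on $J(A)$ and no second appeal to the structure of $\hull{B}$.
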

\begin{proof}
Let $A,B\in \clA$, $x,y\in A$, $x\preceq z\in B$. Let $u$ be the following point in $\omega^n$:
\begin{eqnarray}\label{eqn:u-def}
u(i)=y(i)+1   \text{ for }i\in J(A), \text{ and }
u(i)=z(i)    \text{ for } i\in I(A).
\end{eqnarray}
We have \comm{Well, it seems to be unnecessary complicated}
$$\{i<n\mid u(i)\neq z(i)\} \; \subseteq \; n{\setminus}I(A)\; =\; J(A) \; \subseteq \; J(B);$$
the first inclusion follows from (\ref{eqn:u-def}), the second follows from the monotonicity of $\clA$.
Hence, we have $u(i)=z(i)$ for all $i\in I(B)$. By the definition of $\hull{B}$, we have $u\in \hull{B}$.
Since  $B$ is cofinal in $\hull{B}$  (we use monotonicity again), for some $u'\in B$ we have $u\preceq u'$.

\smallskip

By (\ref{eqn:u-def}), we have $y(i)\leq u(i)$ for all $i<n$: indeed, $y(i)=x(i)\leq z(i)=u(i)$ for $i\in I(A)$, and  $u(i)= y(i)+1$ otherwise. Thus, $y\preceq u$, and so
 $y\preceq u'$. It follows that $\clA$ is  tuned in $(\omega^n, \preceq)$.

\smallskip
In order to show that $\clA$ is  tuned in $(\omega^n, \prec)$, we now assume that $x\prec z$. Then we have
  $y(i)< u(i)$ for all $i<n$, since $y(i)=x(i)< z(i)=u(i)$ for $i\in I(A)$, and  $u(i)= y(i)+1$ otherwise.
  Hence $y\prec  u$. Since $u\preceq u'$, we have $y\prec u'$, as required.
  \comm{DOUBLE CHECK THE LEMMA!!!}
\qed
\end{proof}

%\begin{lemma}
%Let
%\end{lemma}

Let $\clA$ be a partition of a set $W$.
For  $V\subseteq W$, the partition $$\clA\restr V=\{A\cap V\mid  A\in \clA\;\&\;A\cap V\neq \emp\}$$ of $V$ is called the {\em restriction of $\clA$ to $V$}.
For a family $\clB$ of subsets of $W$, the {\em partition induced by $\clB$ on $V\subseteq W$} is the  quotient set
$V/{\sim}$, where $$x\sim y \text{ iff }  \AA A\in\clB \, (x\in A\iff y\in A).$$
%%The {\em conjunction} $\clA\wedge\clB$ of partitions is the partition induced by $\clA\cup\clB$.\comm{Need it?}

\begin{lemma}\label{lem:monpartexists}
If  $\clA$ is a finite partition of $\omega^n$, then there exists its finite monotone refinement.
\end{lemma}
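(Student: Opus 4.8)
The plan is to argue by induction on $n$, the base case $n\le 1$ being Example~\ref{ex:linear}. Given a finite partition $\clA$ of $\omega^n$, I would first deal with the ``corner'' $[N,\infty)^n$ of points all of whose coordinates are $\ge N$, for a suitably large $N$. The key is a stabilization phenomenon: if $A\in\clA$ is cofinal in $\omega^n$ (i.e.\ $A\cap[M,\infty)^n\neq\emp$ for all $M$, since each $[M,\infty)^n$ is cofinal), then no coordinate can be constant on any $A\cap[M,\infty)^n$ --- were it constant with value $c$, then for every $M'\ge M$ the non-empty set $A\cap[M',\infty)^n$ would have that coordinate equal to $c$ and $\ge M'$, forcing $c\ge M'$, which is impossible. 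A non-cofinal $A$, on the other hand, has $A\cap[M,\infty)^n=\emp$ for all large $M$. Hence for $N$ past the finitely many thresholds arising this way, every $A\in\clA$ either misses $[N,\infty)^n$ or is cofinal with $J(A\cap[N,\infty)^n)=n$, in which case $A\cap[N,\infty)^n$ is again cofinal in $\omega^n$. Thus $\clA\restr[N,\infty)^n$ consists of blocks that are cofinal in $\omega^n$ --- hence, their hull being $\omega^n$, pre-cofinal --- and all have $J=n$; so it is \emph{already} a monotone partition of $[N,\infty)^n$.

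Next I would cut the rest of $\omega^n$ into the cells $\rho_{K,b}$, indexed by $K\subsetneq n$ and $b:n{\setminus}K\to\{0,\dots,N-1\}$, given by
$$\rho_{K,b}=\{x\in\omega^n\mid x(i)\ge N\text{ for }i\in K\text{ and }x(i)=b(i)\text{ for }i\in n{\setminus}K\}.$$
Each $\rho_{K,b}$ is a shifted copy of $\omega^{|K|}$ that is cofinal in its hull $\{x\mid x(i)=b(i)\text{ for }i\in n{\setminus}K\}$, so $\{[N,\infty)^n\}\cup\{\rho_{K,b}\}$ is a finite partition of $\omega^n$ into pre-cofinal sets; it is even monotone, as the index of a point's cell only grows along $\preceq$. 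Applying the induction hypothesis inside each $\rho_{K,b}$ with $|K|<n$ (order-isomorphic to $\omega^{|K|}$) yields a finite monotone refinement of the partition induced there by $\clA$; transporting these back and adjoining $\clA\restr[N,\infty)^n$ produces a finite refinement $\clD$ of $\clA$, and it remains to check $\clD$ is monotone.

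Pre-cofinality of a $\clD$-block lying in a cell $\rho_{K,b}$ follows from pre-cofinality of the corresponding $\omega^{|K|}$-block by a routine hull computation (the coordinates in $n{\setminus}K$ join $I$, and the constraint $x(i)\ge N$ for $i\in K$ only helps when dominating a point of the hull). For $J$-monotonicity, consider $x\preceq y$: the within-cell comparisons are the induction hypothesis, the comparisons inside $[N,\infty)^n$ are trivial since there every block has $J=n$, so the remaining case is $x\in\rho_{K,b}$, $y\in\rho_{K',b'}$ with $x$'s cell strictly below $y$'s. If $i\in J([x]_\clD)$ then $i\in K$, so $x(i)\ge N$, so $y(i)\ge N$, so $i\in K'$; what must be shown is that $i\in J([y]_\clD)$ too.

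That last implication is the crux, and it fails for an arbitrary choice of the recursively produced refinements: if the refinement used inside the larger cell $\rho_{K',b'}$ happens to use a bigger internal threshold than the one used inside $\rho_{K,b}$, a coordinate can be ``alive'' at $x$ and yet ``frozen'' at $y$. The remedy is to run the recursion uniformly --- at each depth, handle together all cells that are comparable under $\preceq$, with one common threshold chosen large enough for all the finitely many partitions induced at that depth (equivalently, strengthen the statement proved by induction so that it delivers, for a finite family of partitions at once, monotone refinements governed by a single threshold). Then $x\preceq y$ and $x(i)\ge N$ force $y(i)$ past the threshold controlling coordinate $i$ in $y$'s own cell, so that coordinate stays alive, $i\in J([y]_\clD)$, and monotonicity follows.
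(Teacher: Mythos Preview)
Your scaffold matches the paper's: induction on $n$; pick $N$ so that $\clA\restr[N,\infty)^n$ already consists of cofinal (hence pre-cofinal, $J=n$) blocks; cut the complement into lower-dimensional cells and recurse. You also correctly isolate the crux---independent refinements in different cells need not glue to a monotone partition, because a coordinate can be alive at $x$ in a small cell yet dead at a $\preceq$-larger $y$ in a bigger cell. But the repair you sketch does not close this gap. Take $n=3$, $x=(M,0,0)\in\rho_{\{0\},(0,0)}\cong\omega$ and $y=(M,N,0)\in\rho_{\{0,1\},0}\cong\omega^2$. In $x$'s cell, coordinate $0$ is alive once $M-N$ exceeds the single $\omega$-threshold $m$; in $y$'s cell the $\omega^2$-recursion first applies a threshold $N'$ and then (since $y$'s second internal coordinate is $0$) a further $\omega$-threshold $N''$ on an inner line, so coordinate $0$ is alive at $y$ only once $M-N\ge N'+N''$. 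Monotonicity therefore forces $m\ge N'+N''$; no ``same threshold per depth'' delivers this, because $N''$ is dictated by how $\clA$ sits on lines inside the $2$-cell and has nothing to do with $m$. The alternative reading---strengthen the induction so that the output satisfies $J([x])=\{i:x(i)\ge N\}$ exactly---is simply false: for $\clA=\{\,\{(a,b):a\le 2b\},\ \omega^2\setminus\{(a,b):a\le 2b\}\,\}$ on $\omega^2$, every $N\ge 2$ leaves a bounded block on the cell $\rho_{\{0\},N-1}$.

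What the paper does instead is link the cells by \emph{projections}. It peels off one layer at a time (from $U_{k}$ to $U_{k-1}$), and on the new boundary it builds the partitions $\clC_I$ of the faces $V_I=\{x:x(i)=0\text{ iff }i\in I\}$ in order of \emph{increasing} $|I|$, i.e.\ higher faces first. When it reaches $V_I$, it takes a monotone refinement (via the inductive hypothesis on $\omega^{n-|I|}$) not merely of $\clA\restr V_I$ but of the partition induced by $\clA$ together with all projections $\Pr_I(A)$ of already-built blocks $A\in\clC_K$, $K\subsetneq I$, where $\Pr_I$ zeroes the $I$-coordinates. This is the missing mechanism: for $x\in V_I$, $y\in V_K$ with $K\subsetneq I$ and $x\preceq y$ one has $x\preceq\Pr_I(y)\in V_I$, hence $J([x])\subseteq J([\Pr_I(y)]_{\clC_I})\subseteq J(\Pr_I([y]_{\clC_K}))\subseteq J([y])$. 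Feeding the higher cells' partitions into the construction of the lower cells' is what forces cross-cell monotonicity; a uniform threshold, however chosen, does not encode this dependence.
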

\begin{proof}
By induction on $n$.

\smallskip
Suppose $n=1$.
%$(\omega,\leq)$ is partitionable by Proposition \ref{prop:ord-part}....
Let $k_0$ be the greatest element of the finite set $$\bigcup \{A\in\clA\mid A \textrm{ is finite}\}.$$
Put $\clB=\{\{k\}\mid k\leq k_0\}\cup\{k\mid k_0<k < \omega\}$.
Let $\clC$  be the
the partition induced by $\clA\cup\clB$ on $\omega$.
Consider $x\in \omega$ and put $A=[x]_\clC$. If $x\leq k_0$, then $A=\hull{A}=\{x\}$ and  $J(A)=\emp$.
If  $x>k_0$, then $A$ is cofinal in $\omega$, $\hull{A}=\omega$, $J(A)=\{0\}$.
In follows that $\clC$  is the required monotone refinement of $\clA$.

\smallskip
Suppose $n>1$.
For $k\in \omega$ let $U_{k}=\{y\in\omega^n\mid y(i)\geq k \text{ for all }i<n\}$.
Since $\clA$ is finite, we can chose a natural number $k_0$ such that
\begin{center}
if $y\in U_{k_0}$, then $[y]_\clA$ is cofinal in $\omega^n$.
\end{center}
It follows that the partition $\clA\restr U_{k_0}$  is monotone: it consists of cofinal in $\omega^n$ sets which are obviously pre-cofinal, and $J(A)=n$ for all $A\in\clA\restr U_{k_0}$.

We are going to extend this partition step by step
in order to obtain
a sequence of finite monotone partitions of $U_{k_0-1}$, \ldots, $U_{0}=\omega^n$, respectively refining
$\clA\restr U_{k_0-1}, \ldots, \clA\restr U_{0}=\clA$.\comm{Engl}

%We are going to construct a sequence of finite monotone partitions of $U_{k_0}$, $U_{k_0-1}$, \ldots, $U_{0}=\omega^n$.

First, let us describe the construction for the case $k_0=1$, the crucial technical step of the proof.
%show how to extend a finite monotone partition of $U_1$ to a finite monotone partition of $\omega^n$.
%First, let us show how to extend a finite monotone partition of $U_1$ to a finite monotone partition of $\omega^n$.

\smallskip
\noindent{\it Claim A.}
Suppose that $\clB$ is a finite monotone partition of $U_1$ refining $\clA\restr U_1$. Then there exists a finite monotone
partition $\clC$ of $\omega^n$ refining $\clA$ such that $\clB\subseteq \clC$.
\begin{proof}
$\clC$ will be the union of $\clB$ and a partition of the set
$$V=\{x\in\omega^n\mid x(i)=0 \text{ for some } i<n\} = \omega^n{\setminus}U_1.$$
To construct the required partition of $V$, for $I\subseteq n$ put $$V_I=\{x\mid \AA i<n\,(i\in I \,\iff\, x(i)=0)\}.$$
Then $\{V_I\mid \emp \neq I\subseteq n\}$ is a partition of $V$, $V_\emp=U_1$.

Each $V_I$ considered with the order $\preceq$ on it  is isomorphic to $(\omega^{n-|I|},\preceq)$.
Thus, by the induction hypothesis, for a non-empty $I\subseteq n$ we have:
\begin{equation}\label{eq:partVI}
\text{Each finite partition of $V_I$ admits a finite monotone refinement.\comm{details}}
\end{equation}

For $I\subseteq n$, by induction on the cardinality of $I$ we define a finite partition $\clC_I$ of $V_I$.

We put $\clC_\emp=\clB$.

Assume that $I$ is non-empty.
Consider the projection $\Pr_I: x\mapsto y$  such that
$y(i)=0$ whenever $i\in I$, and $y(i)=x(i)$ otherwise. Note that for all $K\subset I$,
$x\in V_K$ implies $\Pr_I(y)\in V_I$.
Let $\clD$ be the partition induced on $V_I$ by the family
\begin{equation}\label{eq:defOfClI}
\clA \cup \bigcup\limits_{K\subset I}\{\Pr\nolimits_I(A)\mid A\in V_K \}.
\end{equation}
By an immediate induction argument, $\clD$ is finite.
Let $\clC_I$ be a finite monotone refinement of $\clD$, which exists according to
(\ref{eq:partVI}).

We put
$$\clC=\bigcup_{I\subseteq n} \clC_I.$$
Then $\clC$ is a finite refinement of $\clA$. We have to check monotonicity.

Every element $A$ of $\clC$ is pre-cofinal, because $A$ is an element of a monotone partition $\clC_I$ for some $I$.
In order to check the second condition of monotonicity,
we consider $x,y$ in $\omega^n$ with $x\preceq y$ and  show  that
\begin{equation}\label{eq:monReq}
J([x]_\clC)\subseteq J([y]_\clC).
\end{equation}
Let $x \in V_I$, $y \in V_K$ for some $I,K\subseteq n$.
Since $x\preceq y$, we have $K\subseteq I$. If $K=I$, then (\ref{eq:monReq}) holds, since
$[x]_\clC$ and $[y]_\clC$ belong to the same monotone partition $\clC_I$.
Assume that $K\subset I$.  In this case we have:
$$J([x]_\clC)\subseteq J([\Pr\nolimits_I(y)]_\clC)\subseteq J(\Pr\nolimits_I([y]_\clC))\subseteq J([y]_\clC).$$
To check the first inclusion, we observe that $\Pr_I(y)$ belongs to $V_I$ (since $K\subset I$). This means that
$[x]_\clC$ and $[\Pr_I(y)]_\clC$ are elements of the same partition $\clC_I$.
We have $x\preceq \Pr_I(y)$, since $x\in V_I$ and $x\preceq y$. Now the first inclusion follows from monotonicity of $\clC_I$.
By (\ref{eq:defOfClI}),
 $\Pr\nolimits_I([y]_\clC)$  is the union of some elements of $\clC_I$ (since $K\subset I$ and $[y]_\clC\in \clC_K$); trivially, $\Pr_I(y)\in \Pr_I([y]_\clC)$,
 hence $[\Pr\nolimits_I(y)]_\clC$ is a subset of $\Pr\nolimits_I([y]_\clC)$. This yields the second inclusion.
The third inclusion is immediate from Definition \ref{def:monot}.
Thus, we have (\ref{eq:monReq}), which proves the claim.
\qed
\end{proof}

From Claim A one can easily obtain the following: \comm{details}

\smallskip
\noindent{\it Claim B.} Let $0<k<\omega$. If $\clB$ is a finite monotone partition of $U_k$ refining $\clA\restr U_k$, then there exists a finite monotone
partition $\clC$ of $U_{k-1}$ refining $\clA$ such that $\clB\subseteq \clC$.

\smallskip
Applying Claim B $k_0$ times, we obtain the required monotone refinement of $\clA$. This proves Lemma \ref{lem:monpartexists}.\qed
\end{proof}

From the above two lemmas we obtain that the frames $(\omega^n, \preceq)$ and $(\omega^n, \prec)$, $0<n<\omega$, are tunable.
Now the proof of the Theorem \ref{thm:omegaNpLocFin} immediately follows from Proposition \ref{prop:tunableLF}.
\comm{
%\begin{corollary}
\comm{Intervals staff...}
\comm{See {\em their} papers!}
%\end{corollary}
}

\begin{corollary}\label{cor:omegaNfmp}
For all finite $n$, the logics $\Log(\omega^n,\preceq)$ and $\Log(\omega^n,\prec)$
have the finite model property.
\end{corollary}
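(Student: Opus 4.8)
The plan is to obtain the corollary at once from Theorem~\ref{thm:omegaNpLocFin} together with the elementary correspondence between a frame and its complex algebra recorded in Section~2. Recall that for any frame $\frF$ the logic $\Log(\frF)$ is the logic of the modal algebra $\Alg(\frF)$, and hence the logic of the class of all finitely generated subalgebras of $\Alg(\frF)$. If $\Alg(\frF)$ is locally finite, each of these subalgebras is finite, so $\Log(\frF)$ is complete with respect to a class of finite algebras; by definition this means $\Log(\frF)$ has the finite model property. This is precisely the implication ``local finiteness of the frame's algebra $\Rightarrow$ finite model property of the logic'' stated in the paragraph following Proposition~\ref{prop:tunableLF}.

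Next I would instantiate $\frF$ by $(\omega^n,\preceq)$ and by $(\omega^n,\prec)$. Theorem~\ref{thm:omegaNpLocFin} asserts that $\Alg(\omega^n,\preceq)$ and $\Alg(\omega^n,\prec)$ are locally finite for every finite $n>0$; by the previous paragraph, $\Log(\omega^n,\preceq)$ and $\Log(\omega^n,\prec)$ therefore have the finite model property for all such $n$. The boundary case $n=0$ is trivial, since $\omega^0$ is a one-point frame, so the corollary holds for every finite $n$.

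I do not expect any real obstacle: every ingredient is already in place. The monotone-partition machinery (the lemma that a monotone partition is tuned, and Lemma~\ref{lem:monpartexists}), set up simultaneously for $\preceq$ and for $\prec$, yields tunability of the two frames; Proposition~\ref{prop:tunableLF} turns tunability into local finiteness of the algebra; and the algebra/logic correspondence turns that into the finite model property. The only point to watch is the direction of the last implication: local finiteness of the frame's algebra gives the finite model property of its logic, but not local finiteness of the logic itself, as the remark in Section~2 emphasizes.
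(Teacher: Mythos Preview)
Your proposal is correct and matches the paper's intended route: the corollary is stated without a separate proof because it follows immediately from Theorem~\ref{thm:omegaNpLocFin} via the implication ``local finiteness of $\Alg(\frF)$ $\Rightarrow$ finite model property of $\Log(\frF)$'' recorded after Proposition~\ref{prop:tunableLF} (equivalently, from tunability via Proposition~\ref{prop:Filtr}). Your handling of the degenerate case $n=0$ is a harmless addition.
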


\section{Open problems and conjectures}
It is well-known  that every extension of $\Log(\omega,\leq)$ has the finite model property \cite{Bull}.
\begin{question}\label{conj:extFMP}
Let $\vL$ be an extension of  $\Log(\omega^n,\preceq)$ for some finite $n>0$.
Does $\vL$ have the finite model property?
\end{question}

Every extension of a locally finite logic is locally finite, and so has the finite model property.
%If a logic $\vL$ is locally finite (that is, its Lindenbaum algebra is locally finite), then every extension of $\vL$ is locally finite.
Although the algebras
of the frames $(\omega^n,\preceq)$ and $(\omega^n,\prec)$ are locally finite, the logics of these frames are not
(recall that a logic of a transitive frame is locally finite iff the frame is of finite height \cite{Seg_Essay},\cite{Max1975}).
Thus, Theorem \ref{thm:omegaNpLocFin} does not answer Question \ref{conj:extFMP}.

At the same time,  Theorem \ref{thm:omegaNpLocFin} yields another corollary.
A {\em subframe} of a frame $(W,R)$ is the restriction $(V,R\cap (V\times V))$, where $V$ is a non-empty subset of $W$.
It follows  from Definition \ref{def:tune} that if a frame is tunable then all its subframes are
(details can be found in the proof of Lemma 5.9 in \cite{LocalTab16AiML}). From Proposition \ref{prop:tunableLF}, we have:
\begin{proposition}
If the algebra of a frame $\frF$ is locally finite, then the algebras of all  subframes of $\frF$ are.
\end{proposition}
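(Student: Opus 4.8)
\medskip
\noindent\emph{Proof idea.}
The plan is to reduce everything to tunability. By Proposition~\ref{prop:tunableLF}, the algebra of a frame is locally finite iff that frame is tunable, so it suffices to show: \emph{every subframe of a tunable frame is tunable}. So let $\frF=(W,R)$ be tunable, and let $\frG=(V,R_V)$ be a subframe, where $\emp\neq V\subseteq W$ and $R_V=R\cap(V\times V)$. If $V=W$ there is nothing to do; otherwise I would set $D=W\setminus V\neq\emp$ and use it as an auxiliary ``garbage'' block.

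Given a finite partition $\clA$ of $V$, the construction is: first lift $\clA$ to $W$ by putting $\clA^{+}=\clA\cup\{D\}$, which is a finite partition of $W$; then apply tunability of $\frF$ to obtain a finite refinement $\clB$ of $\clA^{+}$ that is tuned in $\frF$; finally restrict back, letting $\clB'$ be the set of those blocks of $\clB$ that are contained in $V$. Since $\clB$ refines $\clA^{+}$, every block of $\clB$ lies entirely inside $V$ or entirely inside $D$, so $\clB'$ is a finite partition of $V$, and it refines $\clA$ (each $A\in\clA$ is a union of $\clB$-blocks, all of which are $\subseteq V$, hence in $\clB'$). The claim to verify is that $\clB'$ is tuned in $\frG$: if $P,Q\in\clB'$ and some $u\in P$ has an $R_V$-successor in $Q$, then $u$ has an $R$-successor in $Q$, so tunedness of $\clB$ in $\frF$ gives, for every $u'\in P$, a $v'\in Q$ with $u'Rv'$; but $Q\subseteq V$, so $v'\in V$ and hence $u'\,R_V\,v'$, as required. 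Thus $\clB'$ is a finite tuned refinement of $\clA$ in $\frG$, so $\frG$ is tunable, and by Proposition~\ref{prop:tunableLF} again $\Alg(\frG)$ is locally finite. (This presumably matches the argument in the proof of Lemma~5.9 of \cite{LocalTab16AiML}.)

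I do not expect a genuine obstacle here; the whole point requiring attention is the single design choice of adding the entire complement $W\setminus V$ as one extra block of the lifted partition. Without it, a block of $\clB$ could straddle the boundary of $V$, and then the successor witness handed to us by tunedness of $\clB$ in $\frF$ could fall outside $V$, so it would fail to be an $R_V$-witness; with the extra block, every $\clB$-block inside $V$ has all its ``relevant'' successors inside the target block $Q\subseteq V$, and the rest is bookkeeping about refinement and finiteness.
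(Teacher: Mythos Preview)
Your proposal is correct and follows exactly the route the paper indicates: reduce to tunability via Proposition~\ref{prop:tunableLF}, then show that tunability passes to subframes by lifting a finite partition of $V$ to $W$ with the single extra block $W\setminus V$, refining tunably in $\frF$, and restricting back. The paper does not spell this out but simply points to the proof of Lemma~5.9 in \cite{LocalTab16AiML}; your argument is the expected one, and the verification that the restricted partition is tuned in $\frG$ is handled correctly.
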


\begin{corollary}
For all finite $n$, if $\frF$ is a subframe of $(\omega^n,\preceq)$ or of  $(\omega^n,\prec)$, then
$\Alg(\frF)$ is locally finite, and $\Log(\frF)$ has the finite model property.
\end{corollary}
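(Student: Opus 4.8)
The plan is purely to assemble the pieces already in place. Fix a finite $n$; if $n=0$ the frame is a single reflexive point, whose complex algebra is the two-element algebra and hence trivially locally finite, so assume $n>0$. Let $\frF$ be a subframe of $(\omega^n,\preceq)$; the argument for $(\omega^n,\prec)$ is word-for-word the same.

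First I would invoke Theorem~\ref{thm:omegaNpLocFin}, which gives that $\Alg(\omega^n,\preceq)$ is locally finite; equivalently, by Proposition~\ref{prop:tunableLF}, $(\omega^n,\preceq)$ is tunable. Next I would apply the Proposition stated immediately before this corollary --- local finiteness of a frame's algebra is inherited by all of its subframes --- to conclude that $\Alg(\frF)$ is locally finite. Unwinding that step for concreteness: tunability passes to subframes, since given a finite partition $\clA$ of the subdomain $V$ of $\frF$ one forms the finite partition $\clA\cup\{W{\setminus}V\}$ of the ambient frame $(W,R)=(\omega^n,\preceq)$ (dropping the empty block if $V=W$), takes a finite tuned refinement $\clB$ of it, and verifies that $\clB\restr V$ is a finite tuned refinement of $\clA$ in $\frF$. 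Here $V$ is a union of $\clB$-blocks, so $\clB\restr V$ is just the sub-collection of those $\clB$-blocks lying inside $V$; an $R$-step inside $\frF$ witnessing the tuning hypothesis for two such blocks also witnesses it in $(W,R)$, and the witness returned by the tuning of $\clB$ again lies in $V$, hence in $\frF$.

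Finally, local finiteness of $\Alg(\frF)$ yields the finite model property of $\Log(\frF)$ by the observation recorded right after Proposition~\ref{prop:tunableLF}: $\vL:=\Log(\frF)$ is the logic of $\Alg(\frF)$, equivalently the logic of the finitely generated subalgebras of $\Alg(\frF)$, and all such subalgebras are finite, so $\vL$ is complete with respect to a class of finite algebras. (Alternatively, having shown $\frF$ tunable, one may simply cite Proposition~\ref{prop:Filtr}.) I do not anticipate any genuine obstacle: the corollary is a bookkeeping consequence of Theorem~\ref{thm:omegaNpLocFin} together with the subframe-stability of tunability, the only minor points being the degenerate case $V=W$ in the auxiliary partition and the trivial value $n=0$.
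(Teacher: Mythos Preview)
Your proposal is correct and follows exactly the route the paper takes: invoke Theorem~\ref{thm:omegaNpLocFin} to get local finiteness (equivalently tunability) of the ambient frame, apply the preceding Proposition that local finiteness passes to subframes, and read off the finite model property from Proposition~\ref{prop:tunableLF} and the remark following it. Your explicit unwinding of why tunability is inherited by subframes is a bit more detailed than the paper, which merely cites \cite{LocalTab16AiML} for this step, but the argument is the same.
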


While $\Log(\omega,\leq)$  is not locally finite, the intermediate logic $\ILog(\omega,\leq)$ is.\comm{Cite?}
\begin{conjecture}
For all finite $n$,   $\ILog(\omega^n,\preceq)$ is locally finite.
\end{conjecture}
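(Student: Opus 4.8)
The plan is to pass to the Heyting algebra of upsets and reduce local tabularity of $\ILog(\omega^n,\preceq)$ to a \emph{uniform} local-finiteness statement for that algebra. For a poset $\frF$ write $\mathrm{Up}(\frF)$ for its Heyting algebra of upward closed sets; recall $\ILog(\frF)=\Log_{\mathrm{Int}}(\mathrm{Up}(\frF))$. The free $d$-generated algebra $F_{\ILog(\frF)}(d)$ embeds into a power of $\mathrm{Up}(\frF)$ by sending a formula class to the tuple of its values under all valuations, and the coordinate projections of this embedding are exactly the $d$-generated subalgebras of $\mathrm{Up}(\frF)$. Hence $\ILog(\frF)$ is locally finite iff there is $f\colon\omega\to\omega$ with $|B|\le f(d)$ for every $d$-generated subalgebra $B$ of $\mathrm{Up}(\frF)$: if such $f$ exists then $F_{\ILog(\frF)}(d)$ embeds into a product of algebras of size $\le f(d)$, of which there are only finitely many up to isomorphism, hence into a power of a single finite algebra, hence is finite; conversely those subalgebras are quotients of $F_{\ILog(\frF)}(d)$. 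Note that \emph{ordinary} local finiteness of $\mathrm{Up}(\omega^n,\preceq)$ is already a consequence of Theorem~\ref{thm:omegaNpLocFin}: regarded as an \logicts{S4}-frame, $(\omega^n,\preceq)$ has $\mathrm{Up}(\omega^n,\preceq)$ as the algebra of open elements of the interior algebra $\Alg(\omega^n,\preceq)$, and Heyting implication of opens is term-definable modally ($a\to b=\Box(\overline{a}\cup b)$); so a $d$-generated Heyting subalgebra of $\mathrm{Up}(\omega^n,\preceq)$ lies inside the $d$-generated modal subalgebra of $\Alg(\omega^n,\preceq)$, which is finite. (In particular $\ILog(\omega^n,\preceq)$ already has the finite model property.) Thus the entire content of the conjecture is to make the bound depend on $d$ alone --- the feature that genuinely fails for the modal algebra.

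For the uniform bound I would use the following representation of upsets. By Dickson's lemma every antichain of $\omega^n$ is finite, so each upset $U$ is a finite union $\uparrow a_1\cup\dots\cup\uparrow a_m$ of principal upsets with $\{a_1,\dots,a_m\}$ its set of minimal elements; moreover every nonempty upset is cofinal, so Heyting negation takes only the values $\varnothing,\omega^n$ on $\mathrm{Up}(\omega^n,\preceq)$, which already collapses the generic one-generated behaviour --- a single generator $U$ yields only $\{\varnothing,U,\omega^n\}$. For $n=1$ the bound is Example~\ref{ex:linear}: generators $[k_1,\omega),\dots,[k_d,\omega)$ yield the subalgebra $\{\varnothing,\omega\}\cup\{[\max_{i\in S}k_i,\omega)\mid\varnothing\ne S\subseteq\{1,\dots,d\}\}$, of size $\le 2+2^d$. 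For $n>1$ I would try to induct on $n$, analysing $\mathrm{Up}(\omega^n,\preceq)$ over the decomposition $\omega^n=\bigcup_{I}V_I$ into the cofinal interior and the lower-dimensional faces $V_I$ used in the proof of Lemma~\ref{lem:monpartexists}, and combining a bound $f_{n-1}$ on the faces with the one-dimensional estimate in the cofinal direction.

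The main obstacles I anticipate are two. First, Heyting implication is non-local, so the trace of a definable upset on a face $V_I$ need not be definable from the traces of the generators inside $\mathrm{Up}(V_I)$; reconciling the face decomposition with implication requires more than the projections $\mathrm{Pr}_I$ of the modal proof. Second, and more serious, is controlling the width (number of minimal elements) of an upset under implication: already for $n=2$, interleaved staircases $U,V$ of width $\approx m$ give $U\to V=\omega^n\setminus{\downarrow}(U\setminus V)$ of width $\approx m$ and introduce coordinate values $c+1$ with $c$ occurring in $U$ or $V$, so iterating implication threatens to cascade the width and the underlying grid beyond any bound depending on $d$ only. One must show this cannot happen inside a finitely generated subalgebra --- i.e. that the Heyting closure of $d$ upsets lives on a grid whose size is bounded in $d$ and $n$, as in the case $n=1$ where only the order of at most $d$ thresholds per coordinate matters. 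Establishing such a finite-basis statement for the definable upsets of $\omega^n$ --- or, alternatively, isolating a general criterion on well-partial-orders that forces local tabularity of the intermediate logic and verifying it for $\omega^n$ --- is, I believe, the crux, and the reason the statement is still only a conjecture. Once it is available, the conjecture follows from the reduction in the first paragraph, and one also obtains that $\ILog(\frG)$ is locally tabular for every subframe $\frG$ of $(\omega^n,\preceq)$.
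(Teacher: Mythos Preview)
There is nothing to compare against: the statement is a \emph{conjecture}, placed in Section~4 (``Open problems and conjectures''), and the paper offers no proof --- only the one-line motivation that $\ILog(\omega,\leq)$ is locally finite while $\Log(\omega,\leq)$ is not.

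Your proposal is not a proof either, and you are explicit about this. The reduction of local tabularity of $\ILog(\frF)$ to a uniform bound $f(d)$ on the sizes of $d$-generated subalgebras of $\mathrm{Up}(\frF)$ is correct and standard, and the auxiliary observations you record are all sound: Dickson's lemma gives finite antichains; every nonempty upset of $(\omega^n,\preceq)$ is cofinal, so Heyting negation is two-valued and a single generator yields at most a three-element subalgebra; the case $n=1$ is bounded as you say; and ordinary (non-uniform) local finiteness of $\mathrm{Up}(\omega^n,\preceq)$ indeed follows from Theorem~\ref{thm:omegaNpLocFin} via the embedding of the Heyting algebra of opens into the modal algebra. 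But the inductive step for $n>1$ is only a plan, and you yourself identify the missing ingredient --- uniform control of what Heyting implication does to the set of minimal elements of an upset --- as ``the reason the statement is still only a conjecture.'' So your text is a reasonable outline of an approach together with an honest diagnosis of the gap; it does not settle the conjecture, and neither does the paper.
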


\medskip

The logics of finite direct powers of $(\mathbb{R},\leq)$ and of $(\mathbb{R},<)$ have the finite model property, are finitely axiomatizable, and consequently are decidable \cite{GoldMink1980}, \cite{ShehtMink83}, \cite{ShShChron03}. Recently, it was shown that the direct squares $(\mathbb{R},\leq,\geq)^2$ and $(\mathbb{R},<,>)^2$ have decidable bimodal logics \cite{MinkTemp-Nonstr}, \cite{MinkTemp-Str}.

\begin{question} Let $n>1$.
Is $\Log(\omega^n,\preceq)$ decidable? recursively axiomatizable?
Does $\Log(\omega^n,\preceq,\succeq)$ have the finite model property?
\end{question}

\begin{proposition}\label{prop:finiteXpart}
If $\frF$ is tunable and $\frG$ is finite, then $\frF\times \frG$ is tunable.
\end{proposition}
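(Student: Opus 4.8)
The plan is to reduce the tunability of $\frF \times \frG$ to the tunability of $\frF$, exploiting that $\frG$ is finite. Write $\frF = (W, R)$ and $\frG = (V, S)$ with $V$ finite, so the domain of $\frF \times \frG$ is $W \times V$ and the relation $R\times S$ is given by $(w,v)(R\times S)(w',v')$ iff $wRw'$ and $vSv'$. Let $\clA$ be a finite partition of $W \times V$; I must produce a finite tuned refinement.

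First I would slice $\clA$ by the second coordinate: for each $v \in V$, the sets $\{\, \{w \mid (w,v) \in A\} \mid A \in \clA \,\}$ (discarding empties) form a finite partition of $W$. Taking the common refinement over all $v \in V$ of all these partitions — a finite operation since $V$ is finite and $\clA$ is finite — yields a single finite partition $\clA_0$ of $W$ with the property that for every $X \in \clA_0$ and every $v \in V$, the ``fibre'' $X \times \{v\}$ is contained in a single block of $\clA$. Since $\frF$ is tunable, pick a finite tuned (in $\frF$) refinement $\clB_0$ of $\clA_0$. Now define
$$
\clB = \{\, X \times \{v\} \mid X \in \clB_0,\ v \in V \,\}.
$$
This is a finite partition of $W \times V$, and it refines $\clA$ because each $X \times \{v\}$ sits inside a block of $\clA$ (as $X$ refines some block of $\clA_0$, which already had this fibrewise property).

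It remains to check that $\clB$ is tuned in $\frF \times \frG$. Take blocks $X \times \{v\}$ and $Y \times \{v'\}$ of $\clB$ and suppose some $(x,v) \in X \times \{v\}$ has an $(R\times S)$-successor in $Y \times \{v'\}$; then $xRy$ for some $y \in Y$ and $vSv'$. Given an arbitrary $(x',v) \in X \times \{v\}$, tunedness of $\clB_0$ in $\frF$ gives some $y' \in Y$ with $x'Ry'$, and since $vSv'$ still holds we get $(x',v)(R\times S)(y',v') \in Y \times \{v'\}$. This is exactly the tunedness condition of Definition~\ref{def:tune}, so $\clB$ witnesses tunability of $\frF \times \frG$.

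The only mild subtlety — and the step I would be most careful with — is the construction of $\clA_0$: one must verify that the common refinement of finitely many finite partitions of $W$ is again a finite partition and that it genuinely has the fibrewise-inside-a-block property for every $v$ simultaneously. Everything else is bookkeeping, since the second coordinate plays no role in the successor-existence quantifier once $vSv'$ is fixed. (Note also that if $\frG$ is finite one may alternatively cite that $\Alg(\frF\times\frG)$ embeds into a finite power of $\Alg(\frF)$-related algebra and invoke Proposition~\ref{prop:tunableLF}, but the direct partition argument above is self-contained and shorter.)
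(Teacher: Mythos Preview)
Your proof is correct and follows essentially the same approach as the paper: slice $\clA$ along the finite second coordinate, take the common refinement on $W$ (this is exactly the paper's ``partition induced by $\bigcup_{y\in G}\clA_y$''), refine it to a tuned partition of $\frF$, and lift back by crossing with singletons $\{v\}$. You in fact give more detail than the paper, which omits the verification that the resulting partition is tuned in $\frF\times\frG$.
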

\begin{proof}\comm{In haste. Doublecheck.}
Let $\frF=(F,R)$, $\frG=(G,S)$, and
$\clA$ be a finite partition of $F\times G$.
For $A$ in $\clA$ and $y$ in $G$, we put
$\Pr_y(A)=\{x\in F\mid (x,y)\in A\}$, $\clA_y={\{ \Pr_y(A)\mid A\in \clA\}}$.
Let  $\clB$  be the partition induced on $F$ by the family
$\bigcup_{y\in G} \clA_y$. Since $\clB$ is finite, there exists
its finite tuned refinement $\clC$. Consider the partition $$\clD=\{A\times \{w\}\mid A\in \clC  \,\&\,  y\in G \}$$ of $F\times G$.
Then $\clD$ is a finite refinement of $\clA$.
It is not difficult to check that $\clD$ is tuned in $\frF\times \frG$.
\qed\end{proof}

If follows that if the algebra of $\frF$ is locally finite and $\frG$ is finite, then
the algebra of $\frF\times \frG$  is locally finite.
\begin{question}
Consider tunable frames $\frF_1$ and $\frF_2$.
Is the direct product $\frF_1\times\frF_2$ tunable?
\end{question}

If this is true, then Theorem \ref{thm:omegaNpLocFin} immediately follows from the simple one-dimensional case. And, moreover,
in this case Theorem \ref{thm:omegaNpLocFin} can be generalized to arbitrary ordinals in view of the following observation.
\begin{proposition}\label{prop:ord-part}
For every ordinal $\alpha>0$,
the modal algebras $\Alg(\alpha,\leq)$, $\Alg(\alpha,<)$  are locally finite.
\end{proposition}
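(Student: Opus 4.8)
The plan is to show directly that $(\alpha,\leq)$ and $(\alpha,<)$ are tunable for every ordinal $\alpha>0$, and then invoke Proposition \ref{prop:tunableLF}. The one-dimensional analysis in Example \ref{ex:linear} already handles $\alpha=\omega$; the point here is that the same idea — collapse an initial segment to singletons, keep the tails coarse — works uniformly on any linear order, provided one measures ``smallness'' by a well-chosen invariant rather than by finiteness of cardinality. First I would fix a finite partition $\clA$ of $\alpha$. The key observation is that for a linear order $(W,\leq)$, a partition $\clB$ is tuned iff for every $U,V\in\clB$ the implication ``some point of $U$ sees some point of $V$'' $\Rightarrow$ ``every point of $U$ sees some point of $V$'' holds; for $\leq$ this amounts to: whenever $\min U$ is $\leq$-below some point of $V$, so is every point of $U$, i.e. $\sup U\leq\sup V$ is not quite it — rather, $V$ must be cofinal above $U$ in the appropriate sense. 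I would make this precise by the following refinement.

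For each point $\beta<\alpha$ that is \emph{not} the supremum of the block $[\beta]_\clA$ (equivalently, has a $\leq$-successor inside its own block, or is isolated from the top of its block), split it off as a singleton; collect the remaining points into blocks according to which element of $\clA$ they lie in. Concretely, let $S=\{\beta<\alpha\mid \beta<\sup([\beta]_\clA)\text{ is false, in the sense that }[\beta]_\clA\cap(\beta,\alpha)=\emp\ \text{or}\ \beta\text{ must be isolated}\}$; rather than fuss over the exact definition, the clean formulation is: because $\clA$ is finite, the set of ``non-limit-from-within'' points of the blocks, together with finitely many cofinality witnesses, can be arranged so that after removing an initial segment $I$ of $\alpha$ on which we take all singletons, each block $A\in\clA$ restricted to $\alpha\setminus I$ is cofinal in $\alpha\setminus I$ (if it is unbounded) or empty. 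Since $\clA$ is finite, only finitely many blocks are bounded, so there is $\gamma<\alpha$ above all of them; take $I=[0,\gamma]$ when $\gamma+1<\alpha$ (and $I=\alpha$, with $\clB$ all singletons, in the degenerate case that $\alpha$ is a successor and everything is bounded — but then $\alpha$ is finite and $\clA$ already refines to singletons). Set
$$\clB=\{\{\beta\}\mid\beta\in I\}\cup\{A\cap(\gamma,\alpha)\mid A\in\clA,\ A\cap(\gamma,\alpha)\neq\emp\}.$$
This is a finite refinement of $\clA$. To see it is tuned in $(\alpha,\leq)$: if $U,V\in\clB$ and some $u\in U$ has $u\leq v\in V$, I must show every $u'\in U$ has some $v'\in V$ with $u'\leq v'$. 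If $V$ is a singleton $\{v\}$ then $v\in I$, so $u\leq v$ forces $u\in I$, so $U=\{u\}$ and there is nothing to prove. If $V=A\cap(\gamma,\alpha)$ is one of the tail blocks, it is cofinal in $(\gamma,\alpha)$, hence cofinal in $\alpha$, so for \emph{any} $u'$ there is $v'\in V$ with $u'\leq v'$. Either way the tuning condition holds. For $(\alpha,<)$ the argument is identical, using that a cofinal subset of $\alpha$ also satisfies $\forall u'\,\exists v'\,(u'<v')$ as long as $\alpha$ has no maximum restricted to the relevant range; if $\alpha=\beta+1$ is a successor, the top point $\beta$ sees nothing under $<$, so it is handled trivially (its block must be checked as a source, never as requiring a successor), and the singleton treatment on $I$ again disposes of the source side.

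The main obstacle is purely bookkeeping: pinning down the initial segment $I$ and the threshold $\gamma$ uniformly, and separating the genuinely infinite cofinal tails from the corner cases where $\alpha$ is a successor ordinal or where some block is bounded but has a complicated internal structure near its own supremum. None of this is deep — it is the transfinite restatement of Example \ref{ex:linear} — but the write-up must be careful that ``singletons cover an initial segment and everything above is cofinal'' is genuinely achievable with finitely many blocks, which is exactly where finiteness of $\clA$ is used. Once tunability of $(\alpha,\leq)$ and $(\alpha,<)$ is established, Proposition \ref{prop:tunableLF} yields that $\Alg(\alpha,\leq)$ and $\Alg(\alpha,<)$ are locally finite, completing the proof. \qed
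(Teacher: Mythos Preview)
Your construction fails to produce a finite partition once $\alpha>\omega$. Take $\alpha=\omega\cdot 2$ and $\clA=\{[0,\omega),\,[\omega,\omega\cdot 2)\}$. The block $[0,\omega)$ is bounded, so your threshold $\gamma$ must satisfy $\gamma\geq\omega$; then $I=[0,\gamma]$ is infinite and $\{\{\beta\}\mid\beta\in I\}$ contributes infinitely many singletons to $\clB$. The slogan ``singletons cover an initial segment and everything above is cofinal'' does transfer from $\omega$ to arbitrary ordinals, but the initial segment is no longer finite, so splitting it into singletons is no longer an option. Finiteness of $\clA$ gives you only finitely many bounded \emph{blocks}, not a finite bound on their union.

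The paper's proof repairs exactly this gap by replacing the singleton step with transfinite induction: one sets $\beta=\sup\bigcup\{A\in\clA\mid A\text{ not cofinal in }\alpha\}<\alpha$, applies the inductive hypothesis to obtain a \emph{finite} tuned refinement $\clC$ of $\clA\restr\beta$, and then takes the partition induced by $\clA\cup\clC$ on $\alpha$. The tail blocks $A\cap[\beta,\alpha)$ are cofinal (hence harmless as targets), while the initial segment is handled by $\clC$ rather than by singletons. Your tuning verification for the tail part is fine and carries over; what is missing is precisely the recursive treatment of the initial segment.
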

\begin{proof}
By induction on $\alpha$ we show that the frames $(\alpha,\leq)$, $(\alpha,<)$  are tunable.

For a finite $\alpha$, the statement is trivial.

Suppose that $\clA$ is a finite partition of an infinite $\alpha$.
If every element of $\clA$ is cofinal in $\alpha$, then $\clA$ is tuned in $(\alpha,\leq)$ and in $(\alpha,<)$.
Otherwise,  we put
$$
\beta=\sup\bigcup\{A\in \clA\mid A \text{ is not cofinal in } \alpha \}.
$$
Since $\clA$  is finite, we have $\beta<\alpha$.\comm{TRIPLE CHECK} Put $\clB=\clA\restr \beta$. By
the induction hypothesis, there exists a finite tuned refinement $\clC$ of $\clB$.\comm{tuned where?}
Then the partition of $\alpha$ induced by $\clA\cup\clC$ is the required refinement of $\clA$.
\qed
\end{proof}

\begin{question}
%Consider tunable frames $\frF_1$ and $\frF_2$.
%Is the direct product $\frF_1\times\frF_2$ tunable?
Let $(\alpha_i)_{i\in I}$ be a finite family of ordinals. Are the algebras of the direct products $\prod_{i\in I}(\alpha_i,\leq)$,
$\prod_{i\in I}(\alpha_i,<)$ locally finite?
Do the logics of these products have the finite model property?
\end{question}

\bibliographystyle{alpha}
\bibliography{omega-n}

%%%%%%  AUTHOR'S ADDRESS INFORMATION AT THE END OF THE PAPER:

\end{document}